\documentclass[11pt,a4paper,english]{article}
\usepackage{amssymb}
\usepackage{amsfonts,yfonts}
\usepackage[all]{xy}
\usepackage{amsfonts,amscd,amssymb,amsthm}
\usepackage{amsfonts,amssymb,amsmath,amsxtra,amscd,amsthm,eucal,graphicx,graphics}
\usepackage{tikz}
\usepackage{enumerate}
 \usepackage[T1]{fontenc}
\usepackage[utf8]{inputenc}
\usepackage{babel}

\newcommand{\cupdot}{\mathbin{\mathaccent\cdot\cup}}

    \oddsidemargin  0.1in
    \evensidemargin 0.1in
    \textwidth      6.0in
    \headheight     0.0in
    \topmargin      0.0in
    \textheight=8.0in

\theoremstyle{definition}
\newtheorem{theorem}{Theorem}[section]

\newtheorem{proposition}[theorem]{Proposition}
\newtheorem{lemma}[theorem]{Lemma}

\newtheorem{conjecture}[theorem]{Conjecture}

 \title{On the maximum number of colorings of a  graph}
    \author{Aysel Erey\footnote{e-mail: aysel.erey@gmail.com}}
    \date{Department of Mathematics\\ University of Denver \\Denver, CO, USA 80208 \\[\baselineskip] \today }

\begin{document}

\maketitle

\begin{abstract}
Let $\mathcal{C}_k(n)$ be the family of all connected $k$-chromatic graphs of order $n$. Given a natural number $x\geq k$, we consider the problem of finding the maximum number of $x$-colorings among graphs in  $\mathcal{C}_k(n)$. When $k\leq 3$ the answer to this problem is known, and when $k\geq 4$ the problem is wide open. For $k\geq 4$ it was conjectured that  the maximum number of $x$-colorings is $x(x-1)\cdots (x-k+1)\,x^{n-k}$. In this article, we prove this conjecture under the  additional condition that the independence number of the graphs is at most $2$.
\end{abstract}

\thanks{\textit{Keywords}:
$x$-coloring, chromatic number, $k$-chromatic, chromatic polynomial}

MSC: 05C15, 05C30, 05C31, 05C35


\section{Introduction}
All graphs in this article are {\it simple}, that is, they do not have loops or multiple edges. Let  $V(G)$ and $E(G)$ be the vertex set and edge set of a graph $G$, respectively. The {\em order} of $G$ is $|V(G)|$ which is denoted by $n_G$, and the {\em size} of $G$ is $|E(G)|$. For a nonnegative integer $x$, an \textit{$x$-coloring} of $G$ is a function $f:V(G)\rightarrow \{1,\dots , x\}$ such that $f(u)\neq f(v)$ for every $uv\in E(G)$. The \textit{chromatic number} $\chi(G)$ is smallest $x$ for which $G$ has an $x$-coloring and $G$ is called \textit{k-chromatic} if $\chi(G)=k$. Let $\pi(G,x)$ denote the {\em chromatic polynomial of G}. For nonnegative integers $x$, $\pi(G,x)$ counts the number of $x$-colorings of $G$.

There has been a great interest in maximizing or minimizing the number of $x$-colorings over various families of graphs. Here we shall focus on the family of all connected graphs with fixed chromatic number and fixed order. Let $\mathcal{C}_k(n)$ be the family of all connected $k$-chromatic graphs of order $n$.
Given a natural number $x\geq k$, we consider the problem of finding the maximum number of $x$-colorings among graphs in  $\mathcal{C}_k(n)$. When $k\leq 3$ the answer to this problem is known. It is well known that (see, for example, \cite{dongbook})  for $k=2$ and $x\geq 2$, the maximum number of $x$-colorings of a graph in $\mathcal{C}_2(n)$ is equal to $x(x-1)^{n-1}$, and extremal graphs are trees when $x\geq 3$. Also, for $x\geq k=3$, the maximum number of $x$-colorings of a graph in  $\mathcal{C}_3(n)$ is
$$(x-1)^{n}-(x-1) \ \ \ \text{for odd} \ n$$
and
$$ (x-1)^{n}-(x-1)^2 \ \ \ \text{for even} \ n$$
 and furthermore the extremal graph is the odd cycle $C_{n}$ when $n$ is odd and odd cycle with a vertex of degree $1$ attached to the cycle (denoted $C_{n-1}^1$) when $n$ is even \cite{tomescu}. For $k\geq 4$, the problem is wide open. For $k\geq 4$, Tomescu~\cite{tomescu} (see also \cite{dongbook,tomescufrench}) conjectured that the maximum number of $x$-colorings of a graph in $\mathcal{C}_k(n)$ is $(x)_{\downarrow k}(x-1)^{n-k}=x(x-1)\cdots (x-k+1)(x-1)^{n-k}$, and  the extremal graphs are those which belong to the family of all connected $k$-chromatic graphs of order $n$ with clique number $k$ and size ${k\choose 2}+n-k$, denoted by $\mathcal{C}^*_k(n)$.

 \begin{conjecture}\cite[pg. 315]{dongbook} \label{tomesdongconj} Let $G$ be a graph in  $\mathcal{C}_k(n)$  where $k\geq 4$. Then for every $x\in \mathbb{N}$ with $x\geq k$
 	$$\pi(G,x)\leq (x)_{\downarrow k}(x-1)^{n-k}.$$
 	Moreover, the equality holds if and only if $G$ belongs to  $\mathcal{C}^*_k(n)$.
 \end{conjecture}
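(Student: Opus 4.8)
The plan is to induct on the order $n$, and \emph{within} each fixed $n$ to run a secondary induction on the number of edges, taken in increasing order; thus to bound $\pi(G,x)$ I may assume the conjecture for every connected $k$-chromatic graph of smaller order, and for every connected $k$-chromatic graph of order $n$ with fewer edges. Two elementary facts drive the reductions. First, subgraph monotonicity: passing to a spanning tree shows $\pi(H,x)\le x(x-1)^{m-1}$ for \emph{every} connected graph $H$ of order $m$, with equality if and only if $H$ is a tree. Second, $\pi(H,x)\ge 0$ at every nonnegative integer $x$, together with deletion--contraction $\pi(G,x)=\pi(G-e,x)-\pi(G/e,x)$. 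The base case $n=k$ is immediate, since the only connected $k$-chromatic graph on $k$ vertices is $K_k$ and $\pi(K_k,x)=(x)_{\downarrow k}$.

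First I would dispose of the graphs that are not $2$-connected. If $G$ has a leaf $v$, then $\pi(G,x)=(x-1)\pi(G-v,x)$ with $G-v\in\mathcal{C}_k(n-1)$, so the order-induction closes this case and equality propagates to membership in $\mathcal{C}^*_k(n)$. If $G$ has a cut vertex, write $G=G_1\cup G_2$ with $V(G_1)\cap V(G_2)=\{u\}$ and $\chi(G_1)=k$ (at least one side attains the chromatic number); then $\pi(G,x)=\pi(G_1,x)\pi(G_2,x)/x$. Applying the order-induction to $G_1$ and the spanning-tree bound to $G_2$ gives
\[
\pi(G,x)\le (x)_{\downarrow k}(x-1)^{|G_1|-k}\cdot x(x-1)^{|G_2|-1}/x=(x)_{\downarrow k}(x-1)^{n-k},
\]
since $|G_1|+|G_2|-1=n$; equality forces $G_1\in\mathcal{C}^*_k(|G_1|)$ and $G_2$ a tree, whence $G\in\mathcal{C}^*_k(n)$. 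Hence I may assume $G$ is $2$-connected.

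Next, still within the edge-induction, suppose $G$ is $2$-connected and some edge $e$ satisfies $\chi(G-e)=k$; this holds, for example, for any edge not lying inside a $K_k$ whenever the clique number is $\omega(G)=k$ and $n>k$. Then $G-e$ is a connected $k$-chromatic graph of order $n$ with fewer edges, so the edge-induction yields $\pi(G-e,x)\le(x)_{\downarrow k}(x-1)^{n-k}$, and since $\pi(G/e,x)\ge 0$ we obtain $\pi(G,x)\le\pi(G-e,x)\le(x)_{\downarrow k}(x-1)^{n-k}$. Because $\pi(G/e,x)>0$ for all large $x$, the inequality is strict there; as every $2$-connected graph of order $n>k$ is automatically outside $\mathcal{C}^*_k(n)$, this is precisely the strictness demanded by the equality statement.

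What remains --- and what I expect to be the whole difficulty --- is the case in which $G$ is $2$-connected and \emph{no} edge deletion preserves the chromatic number, i.e.\ $\chi(G-e)=k-1$ for every edge $e$. These are exactly the $k$-critical graphs other than $K_k$, each of which is $K_k$-free; here the deletion--contraction induction stalls and genuinely new input is required. The structural leverage I would try to exploit is density: by Dirac's theorem a $k$-critical graph has minimum degree at least $k-1$, so $|E(G)|\ge (k-1)n/2$, vastly exceeding the $\binom{k}{2}+n-k$ edges of the members of $\mathcal{C}^*_k(n)$, and one expects this abundance of edges to force $\pi(G,x)$ strictly below the target. Concretely, I would split the range of $x$: for large $x$ the comparison follows from the leading terms $x^n-|E(G)|x^{n-1}+\cdots$ against $x^n-(\binom{k}{2}+n-k)x^{n-1}+\cdots$, while for the finite window $k\le x\le B(k,n)$ I would attempt a Whitney broken-circuit estimate $\pi(G,x)=\sum_i(-1)^iN_ix^{n-i}$ with $N_i\ge 0$, refined by the rigidity of $k$-critical colorings (for each edge $uv$, its endpoints are forced to agree in every $(k-1)$-coloring of $G-uv$). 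Controlling this finite window for arbitrary $K_k$-free $k$-critical graphs is the crux where the conjecture remains open; the additional hypothesis $\alpha(G)\le 2$ of the present paper is exactly what tames it, since it forces $n\le 2k$ and thereby removes the problematic large-$n$ regime, allowing the denseness and coefficient estimates to close.
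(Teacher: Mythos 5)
There is a genuine gap, and you name it yourself: after the (correct and cleanly executed) reductions --- leaves, cut vertices via the complete cut-set theorem, and deletion of any edge $e$ with $\chi(G-e)=k$ --- everything hinges on the $k$-critical, $K_k$-free graphs, and there your argument dissolves into a programme rather than a proof. Neither of the two proposed tools is carried out: the leading-coefficient comparison $x^n-|E(G)|x^{n-1}+\cdots$ only controls $x$ beyond some threshold depending on $G$, and the Whitney/broken-circuit nonnegativity $N_i\ge 0$ gives no mechanism for the claimed strict inequality on the finite window $k\le x\le B(k,n)$; in fact this critical case \emph{is} the open content of Tomescu's conjecture, which the paper itself does not prove --- it is stated as Conjecture~\ref{tomesdongconj}, and the paper's actual theorem (Theorem~\ref{main}) establishes it only under the extra hypothesis $\alpha(G)\le 2$. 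Your closing sentence, that $\alpha(G)\le 2$ ``allows the denseness and coefficient estimates to close,'' is unsupported and is also not how the paper uses that hypothesis: the paper's proof adds \emph{all} missing edges at a vertex $u$ of maximum degree, writes $\pi(G,x)=\pi(G_t,x)+\sum_i \pi(H_i,x)$, and exploits $\alpha(G)\le 2$ to identify each contraction as a join, $H_i\cong K_1\vee(G-\{u,v_i\})$, so that $\pi(H_i,x)=x\,\pi(G-\{u,v_i\},x-1)$ can be bounded by the order-induction; connectivity of $G-u$ and $G-\{u,v_i\}$ is secured by a separate structural analysis of stable cut-sets of size at most $2$ (Proposition~\ref{cutset2prop}, Lemmas~\ref{cutvertexlemma} and~\ref{cutsetlemma}), and the final comparison uses Brooks' theorem ($\Delta(G)\ge k$) together with the binomial estimate $(x-2)^{n-k}+(n-k)(x-2)^{n-k-1}\le (x-1)^{n-k}$. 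None of this machinery appears in, or is replaceable by, your sketch.

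A secondary, fixable flaw: in the edge-deletion step you argue strictness only ``for all large $x$,'' but the equality clause must be settled at \emph{every} integer $x\ge k$. Since $\pi(G,x)=\pi(G-e,x)-\pi(G/e,x)$ and $\chi(G/e)\le k+1$, the term $\pi(G/e,x)$ can vanish at $x=k$, so equality at $x=k$ is not excluded by your argument; you would additionally need that $\pi(G-e,k)$ is strictly below the bound, or that $G-e\in\mathcal{C}^*_k(n)$ forces a contradiction with $\chi(G/e)=k+1$. But this is a detail; the decisive defect is that the $k$-critical case --- the entire difficulty of the conjecture --- is left open, so the proposal is a partial reduction, not a proof of the statement.
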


Several authors studied this conjecture. Tomescu~\cite{tomescu} proved this conjecture for $k=4$ under the additional condition that graphs  are planar. In \cite{brownerey}, the authors proved this conjecture for every $k\geq 4$, provided that $x\geq n-2+\left( {n\choose 2}-{k\choose 2}-n+k \right) ^2$. Our main result in this article is Theorem~\ref{main} which proves this conjecture for graphs whose independence numbers are at most 2 (i.e. complements of triangle-free graphs).
 
Let $G / e$ be the graph formed from $G$ by {\it contracting} edge $e$, that is, by identifying the ends of $e$ (and taking the underlying simple graph). For $e\notin E(G)$, observe that $$\chi(G)=\operatorname{min}\{\chi(G+e)\, , \, \chi(G/ e)\}$$ and the well known  \textit{Edge Addition-Contraction Formula} says that   $$\pi(G,x)=\pi(G+e,x)+\pi(G/ e,x).$$ Also, the chromatic polynomial of a graph can be computed by using the {\em Complete Cut-set Theorem}: If $G_1$ and $G_2$ are two graphs such that $G_1\cap G_2\cong K_r$, then
$$\pi(G_1\cup G_2,x)=\frac{\pi(G_1,x)\, \pi(G_2,x)}{(x)_{\downarrow r}}.$$ 

Let $G\cupdot H$ be the disjoint union of $G$ and $H$, and $G\vee H$ be their join. It is easy to see that $$\pi(G\vee K_1,x)=x\, \pi(G,x-1).$$
The maximum degree of a graph $G$ is $\Delta(G)$, and a vertex $v$ of $G$ is {\it universal} if it is joined to all other vertices. In \cite{brownerey}, Conjecture~\ref{tomesdongconj} was proven for graphs which contain a universal vertex.

\begin{lemma}\label{universallemma}\cite{brownerey}
	Let $G\in \mathcal{C}_k(n)$ and $\Delta(G)=n-1$. Then, for all $x\in \mathbb{N}$ with $x\geq k$, the inequality  $\pi(G,x)\leq(x)_{\downarrow k}\, (x-1)^{n-k}$ holds.  Furthermore, the equality is achieved if and only if $G=K_1\vee (K_{k-1}\cupdot (n-k)K_1)$. 
\end{lemma}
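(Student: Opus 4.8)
The plan is to exploit the universal vertex through the cone formula and thereby reduce everything to a clean statement about \emph{all} graphs of a prescribed chromatic number. Write $v$ for the universal vertex, so that $G = K_1\vee H$ with $H=G-v$ a graph of order $n-1$; since $v$ is adjacent to everything, $\chi(H)=k-1$. Applying $\pi(K_1\vee H,x)=x\,\pi(H,x-1)$ and setting $y=x-1\ge k-1$, the desired inequality $\pi(G,x)\le (x)_{\downarrow k}(x-1)^{n-k}$ becomes, after cancelling the factor $x$ and writing $(x)_{\downarrow k}=x\,(y)_{\downarrow k-1}$,
$$\pi(H,y)\ \le\ (y)_{\downarrow k-1}\,y^{(n-1)-(k-1)}.$$
So the whole problem reduces to an extremal statement for $H$, which I would isolate as a sublemma.

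\textbf{Sublemma.} For every graph $F$ of order $p$ with $\chi(F)=m$ and every integer $y\ge m$ one has $\pi(F,y)\le (y)_{\downarrow m}\,y^{\,p-m}$, with equality if and only if $F\cong K_m\cupdot(p-m)K_1$. Granting this with $F=H$, $p=n-1$, $m=k-1$ (note $(n-1)-(k-1)=n-k$) gives the bound, and the equality case $H\cong K_{k-1}\cupdot(n-k)K_1$ translates back to $G=K_1\vee\big(K_{k-1}\cupdot(n-k)K_1\big)$, exactly as asserted.

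To prove the Sublemma I would induct on $p$, the base case $p=m$ forcing $F=K_m$. The engine is that deleting edges never decreases the number of proper colourings, since $\pi(F-e,y)=\pi(F,y)+\pi(F/e,y)\ge\pi(F,y)$. Deleting \emph{all} edges at a chosen vertex $v$ therefore gives
$$\pi(F,y)\ \le\ \pi\big((F-v)\cupdot K_1,\,y\big)\ =\ y\,\pi(F-v,y).$$
If $F$ has a vertex $v$ with $\chi(F-v)=m$ — in particular if $F$ has an isolated vertex or a leaf, and more generally whenever $F$ is \emph{not} colour-critical — then the induction hypothesis yields $\pi(F-v,y)\le (y)_{\downarrow m}\,y^{(p-1)-m}$, and substituting gives $\pi(F,y)\le (y)_{\downarrow m}\,y^{p-m}$. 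Here equality forces $\pi(F/e,y)=0$ for every deleted edge, hence (taking $y$ large) that $v$ was isolated; so the extremal graphs are exactly those built from $K_m$ by repeatedly adjoining isolated vertices, namely $K_m\cupdot(p-m)K_1$.

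The hard part, which I expect to be the sole genuine obstacle, is the remaining case in which $F$ is \emph{vertex-critical}, i.e. $\chi(F-v)=m-1$ for \emph{every} vertex $v$ (odd cycles at $m=3$ are the prototype). Here the clean step fails: deleting the edges at any vertex drops the level to $m-1$, and the inherited bound $\pi(F,y)\le y\,\pi(F-v,y)\le y\,(y)_{\downarrow m-1}\,y^{p-m}$ overshoots the target $(y)_{\downarrow m}\,y^{p-m}=(y-m+1)\,(y)_{\downarrow m-1}\,y^{p-m}$, since $y>y-m+1$ once $m\ge 2$. The deletion–contraction identity $\pi(F,y)=\pi(F-e,y)-\pi(F/e,y)$ also places both $F-e$ and $F/e$ at level $m-1$, and estimating their difference against $(y)_{\downarrow m}\,y^{p-m}$ cannot be done by feeding in the extremal level-$(m-1)$ bounds alone. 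I therefore expect the real work to consist in treating colour-critical graphs directly — exploiting connectivity, the minimum-degree bound $\delta\ge m-1$, and Gallai-type estimates on the number of edges — or in strengthening the inductive hypothesis so that it retains enough information to close this critical case.
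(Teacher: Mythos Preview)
The paper does not prove this lemma: it is quoted from \cite{brownerey} and used as a black box, so there is no in-paper argument to compare your attempt against.

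On its own merits, your reduction is the natural one. Passing from $G=K_1\vee H$ to the inequality $\pi(H,y)\le (y)_{\downarrow m}\,y^{\,p-m}$ with $m=k-1$, $p=n-1$, $y=x-1$ is exactly how one should exploit the universal vertex, and the Sublemma you isolate is the correct target statement (the extremal problem for \emph{all} $m$-chromatic graphs of order $p$, with no connectivity assumption). The inductive step when $F$ has a vertex whose removal preserves the chromatic number is also sound, as is the equality analysis in that case.

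But the proposal is, by your own admission, incomplete. For vertex-critical $F$ your induction collapses, and you correctly observe that neither deleting all edges at a vertex nor a single deletion--contraction step recovers the bound from the level-$(m-1)$ estimates alone: both overshoot by a factor of roughly $y/(y-m+1)$. This is a genuine obstruction, not a cosmetic gap --- the colour-critical case is precisely where the content of the Sublemma lives, and it requires an argument beyond the bare induction you sketch. Your closing paragraph accurately diagnoses the difficulty but does not resolve it, so as written the Sublemma, and hence the lemma, remains unproved.
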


Lastly, let $\omega(G)$ and $\alpha(G)$ be the clique number and independence number of $G$ respectively.

\section{Main Results}

\begin{lemma}\label{general_k_clique_lemma}
	Let $G\in \mathcal{C}_k(n)$ and $\omega(G)=k$. Then for all $x\in \mathbb{N}$ with $x\geq k$, $$\pi(G,x)\leq (x)_{\downarrow k}\, (x-1)^{n-k}$$ with equality if and only if $G\in  \mathcal{C}_k^*(n) $.
\end{lemma}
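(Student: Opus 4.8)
The plan is to prove the statement by strong induction on the order $n$, driven by a single crude vertex-deletion inequality. The base case $n=k$ is immediate: the only graph in $\mathcal{C}_k(k)$ with clique number $k$ is $K_k$, for which $\pi(K_k,x)=(x)_{\downarrow k}$ and $K_k\in\mathcal{C}_k^*(k)$. For the inductive step the engine is the observation that for any vertex $u$ with $\deg(u)\ge 1$,
$$\pi(G,x)=\sum_{c}\bigl(x-|c(N(u))|\bigr)\le (x-1)\,\pi(G-u,x),$$
where $c$ ranges over the proper $x$-colorings of $G-u$, since each such $c$ uses at least one color on the nonempty set $N(u)$. So $\pi(G,x)\le(x-1)\pi(G-u,x)$, and it remains to delete a vertex keeping us inside the family.

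First I would locate a vertex $u$ lying outside a fixed $k$-clique $K$ that is also a non-cut vertex, so that $G-u$ stays connected with unchanged parameters. Such a $u$ comes from the block--cut-vertex tree: $K$ (being $2$-connected, or a single edge) sits inside one block $B_K$, and if $G$ has a cut vertex then its block tree has at least two leaf blocks, so some leaf block $B\ne B_K$ exists; any vertex of $B$ other than its unique cut vertex is then a non-cut vertex of $G$ outside $K$. If $G$ is $2$-connected, any vertex outside $K$ works. For such a $u$ we have $K\subseteq V(G-u)$, hence $\omega(G-u)=k$ and therefore $\chi(G-u)=k$ (it is sandwiched between $\omega(G-u)$ and $\chi(G)$), and $G-u$ is connected of order $n-1\ge k$. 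Applying the induction hypothesis and chaining,
$$\pi(G,x)\le(x-1)\,\pi(G-u,x)\le(x-1)\,(x)_{\downarrow k}(x-1)^{n-1-k}=(x)_{\downarrow k}(x-1)^{n-k}.$$

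For the equality characterization, the ``if'' direction follows by recording that every $G\in\mathcal{C}_k^*(n)$ is built from $K_k$ by repeatedly attaching pendant vertices: contracting $K$ to a point turns the $n-k$ non-clique edges into a connected graph on $n-k+1$ vertices with $n-k$ edges, i.e.\ a tree, which forces each non-clique vertex to be attachable as a leaf, and each pendant addition multiplies $\pi$ by $(x-1)$, giving $\pi(G,x)=(x)_{\downarrow k}(x-1)^{n-k}$. For the converse, if equality holds for all $x\ge k$ then both displayed inequalities are polynomial identities; the second forces $G-u\in\mathcal{C}_k^*(n-1)$ by induction, and the first forces $\deg(u)=1$, since whenever $\deg(u)\ge 2$ two neighbors $w_1,w_2$ of $u$ receive distinct colors in some proper coloring of $G-u$ for all large $x$ (immediately if $w_1w_2\in E(G)$, and otherwise because $\pi\bigl((G-u)+w_1w_2,x\bigr)>0$), making $|c(N(u))|\ge 2$ for that coloring and the first inequality strict there. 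Thus $u$ is a pendant and $G$ is a pendant extension of a graph in $\mathcal{C}_k^*(n-1)$, so $G\in\mathcal{C}_k^*(n)$. I expect the main obstacle to be exactly this last step: verifying that tightness of the crude bound $\pi(G,x)\le(x-1)\pi(G-u,x)$ is equivalent to $u$ being a pendant, and confirming that the deletion vertex can always be chosen outside the clique without destroying connectivity; the remaining arguments are routine bookkeeping around the induction.
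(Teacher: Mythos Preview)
Your argument is correct and takes a genuinely different route from the paper's. The paper does not induct: given $G\notin\mathcal{C}_k^*(n)$, it exhibits an explicit spanning connected subgraph $G'\supseteq G$ consisting of the $k$-clique $H$, one extra cycle $C$ meeting $H$ in at most one edge, and attaching trees; it then evaluates $\pi(G',x)=(x-1)_{\downarrow k-1}\,\pi(C,x)\,(x-1)^{n-k-n_C+1}$ via the Complete Cut-set Theorem and the closed form for $\pi(C,x)$, and checks this is strictly below $(x)_{\downarrow k}(x-1)^{n-k}$ for every $x\ge k$. Your proof replaces this explicit computation by the one-line inequality $\pi(G,x)\le (x-1)\,\pi(G-u,x)$ together with a block--cut-tree argument locating a non-cut vertex $u$ outside the clique; this is more elementary (no cycle formula, no cut-set theorem) and arguably cleaner. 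What the paper's approach buys is that the strict inequality is obtained \emph{pointwise}: for every single integer $x\ge k$, $G\notin\mathcal{C}_k^*(n)$ forces $\pi(G,x)<(x)_{\downarrow k}(x-1)^{n-k}$. As written, your equality analysis assumes equality for all $x\ge k$ simultaneously and then invokes colorings for ``large $x$''. This is easy to repair: once you know $G-u\in\mathcal{C}_k^*(n-1)$ (tree structure hanging off $K_k$), for $k\ge 3$ any two distinct vertices of $G-u$ receive different colors in some $k$-coloring, so the argument that $\deg(u)=1$ goes through already at $x=k$, matching the paper's pointwise statement. Your ``if'' direction (contracting $K$ yields a tree) is fine; the multigraph after contraction has $n-k$ edges on $n-k+1$ vertices and is connected, hence is a simple tree automatically.
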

\begin{proof}
	Let $H$ be a $k$-clique of $G$. If $G$ has no cycle $C$ such that $E(C)\setminus E(H)\neq \emptyset$ then  $G\in  \mathcal{C}_k^*(n) $ and the result is clear. So we assume that there exists a cycle $C$ of $G$ such that $E(C)\setminus E(H)\neq \emptyset$ (i.e. $G\notin  \mathcal{C}_k^*(n)$).  We may choose a cycle $C$ such that $|E(C)\cap E(H)|\leq 1$, as $H$ is a clique. Let $G'$ be  a minimal spanning connected subgraph of $G$ which contains $H$ and $C$. First we shall show that 	$\pi(G',x) = (x-1)_{\downarrow k-1}\, \pi(C,x)\, (x-1)^{n-k-n_C+1}$. If $|E(C)\cap E(H)|=0$ (resp. $|E(C)\cap E(H)|=1$), let $G_1$ and $G_2$ be two subgraphs of $G'$ such that $G_1$ contains $H$, $G_2$ contains $C$, $G_1\cup G_2=G'$, and $G_1$ and $G_2$ intersect in a single vertex (resp. edge) of $H$. By the Complete Cut-set Theorem, if $|E(C)\cap E(H)|=0$ then $\pi(G',x)=\frac{\pi(G_1,x)\pi(G_2,x)}{x}$, and if $|E(C)\cap E(H)|=1$ then $\pi(G',x)=\frac{\pi(G_1,x)\pi(G_2,x)}{x(x-1)}$. In each case, $G_1\in \mathcal{C}_k^*(n_{G_1})$ and $G_2$ is a connected unicyclic graph. Therefore,
	$$\pi(G_1,x)=(x)_{\downarrow k} (x-1)^{n_{G_1}-k}$$
	and
	$$\pi(G_2,x)=\pi(C,x) (x-1)^{n_{G_2}-n_C}.$$
	If $|E(C)\cap E(H)|=0$ then $n_{G_1}+n_{G_2}=n+1$, and if $|E(C)\cap E(H)|=1$ then $n_{G_1}+n_{G_2}=n+2$. Thus, we obtain $\pi(G',x) = (x-1)_{\downarrow k-1}\, \pi(C,x)\, (x-1)^{n-k-n_C+1}$. Also,
	
	\begin{eqnarray*}
\pi(G',x) &=& (x-1)_{\downarrow k-1}\, \pi(C,x)\, (x-1)^{n-k-n_C+1}\\
&=& (x-1)_{\downarrow k-1}\, \left((x-1)^{n_C}+(-1)^{n_C}(x-1) \right)\, (x-1)^{n-k-n_C+1}\\
&=&  (x-1)_{\downarrow k-1}\, \left( (x-1)^{n-k+1}+(-1)^{n_C}(x-1)^{n-k-n_C+2} \right)\\
&<&  (x-1)_{\downarrow k-1}\, \left( (x-1)^{n-k+1}+(x-1)^{n-k} \right)\\
&=&  (x)_{\downarrow k}\,  (x-1)^{n-k} 
	\end{eqnarray*}
	where the inequality holds as $n_C\geq 3$. Now the result follows since $\pi(G',x)\geq \pi(G,x)$.

\end{proof}

A \textit{cut-set} of a connected graph is a subset of the vertex set whose removal disconnects the graph. To prove our main result, we first deal with graphs which have a cut-set of size at most $2$.

\begin{proposition}\label{cutset2prop}Let $G$ be a connected $k$-chromatic graph with $\alpha(G)=2$. If $G$ has a stable cut-set $S$ of size at most $2$ then 
\begin{itemize}
\item [(i)] $G\setminus S$ has exactly two connected components, say, $G_1$ and $G_2$,
\item [(ii)] $G_1$ and $G_2$ are complete graphs,
\item [(iii)] $\operatorname{max}\{\chi(G_1),\chi(G_2)\}\geq k-1$,
\item [(iv)] For every $u$ in $S$, either $V(G_1)\subseteq N_G(u)$ or $V(G_2)\subseteq N_G(u)$.
\end{itemize}
\end{proposition}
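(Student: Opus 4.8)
The single hypothesis doing almost all the work here is $\alpha(G)=2$, which says that no three vertices of $G$ are pairwise non-adjacent. The plan is to first record the elementary fact that two vertices lying in distinct connected components of $G\setminus S$ are non-adjacent in $G$; combined with $\alpha(G)=2$ this immediately forces the structure asserted in (i), (ii), and (iv), all of which reduce to producing a forbidden independent triple.

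For (i), since $S$ is a cut-set, $G\setminus S$ has at least two components; if it had three or more, choosing one vertex from each of three distinct components would produce an independent set of size $3$, a contradiction, so there are exactly two, call them $G_1$ and $G_2$. For (ii), if say $G_1$ were not complete, I would pick non-adjacent $a,b\in V(G_1)$ and any $c\in V(G_2)$ (which is non-empty by (i)); then $a,b$ are non-adjacent to each other and, lying in a different component, to $c$, so $\{a,b,c\}$ is independent, again impossible, whence $G_1$ and $G_2$ are complete. Part (iv) runs on the same idea: if some $u\in S$ dominated neither clique, then choosing $a\in V(G_1)\setminus N_G(u)$ and $b\in V(G_2)\setminus N_G(u)$ would give an independent triple $\{u,a,b\}$, a contradiction.

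The only part that genuinely uses $\chi(G)=k$ rather than just the independence number is (iii), and this is where I expect the (still modest) crux to lie. I would argue by contradiction, assuming $\max\{\chi(G_1),\chi(G_2)\}\leq k-2$. Since $G_1$ and $G_2$ are complete by (ii), each $G_i$ is properly colored with exactly $\chi(G_i)=|V(G_i)|$ colors, and because there is no edge between $G_1$ and $G_2$ I may reuse a single common palette and color $G_1\cup G_2$ with $\max\{\chi(G_1),\chi(G_2)\}\leq k-2$ colors. The decisive observation is that $S$ is \emph{stable}: its at most two vertices are mutually non-adjacent, so the whole of $S$ can be given a single fresh color $k-1$, which is legal because every neighbor of a vertex of $S$ lies in $G_1\cup G_2$ and therefore already carries a color in $\{1,\dots,k-2\}$. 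This produces a proper $(k-1)$-coloring of $G$, contradicting $\chi(G)=k$, and hence $\max\{\chi(G_1),\chi(G_2)\}\geq k-1$. The point worth emphasizing is that stability of the cut-set is precisely what upgrades the crude bound $\chi(G)\leq \max\{\chi(G_1),\chi(G_2)\}+|S|$ to $\chi(G)\leq \max\{\chi(G_1),\chi(G_2)\}+1$; without stability one would only recover $\max\{\chi(G_1),\chi(G_2)\}\geq k-2$, which is too weak.
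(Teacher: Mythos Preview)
Your proof is correct and follows essentially the same approach as the paper: each of (i), (ii), and (iv) is obtained by exhibiting an independent triple, and (iii) is obtained by coloring $G_1\cup G_2$ with at most $k-2$ colors and then assigning one new color to all of the stable set $S$. Your additional commentary on why stability of $S$ is needed in (iii) is a nice touch, but the underlying argument is identical to the paper's.
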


\begin{proof}
\begin{itemize}
\item[(i)] If $G\setminus S$ had more than two components then we could pick a vertex from each component and get a stable set of size at least $3$. And this would contradict with the assumption that $\alpha(G)=2$.

\item[(ii)] Suppose on the contrary that $G_1$ or $G_2$ is not a complete graph. Without loss, we may assume $G_1$ has two nonadjacent vertices $u$ and $v$. Let $w$ be a vertex of $G_2$. Then $\{u,v,w\}$ is a stable set of size $3$ and again this  contradicts with $\alpha(G)=2$.

\item[(iii)] Suppose that $\chi(G_1)$ and  $\chi(G_2)$ are at most $k-2$. Then we can properly color $G_1$ and $G_2$ with colors $1,\dots , k-2$ and we can assign a new color $k-1$ to all vertices in $S$. This yields a proper $(k-1)$-coloring of $G$ and this contradicts with the assumption that $G$ is $k$-chromatic.

\item[(iv)] If there exists a vertex $u$ in $S$ such that $u$ has a non-neighbor $v$ in $G_1$ and a non-neighbor $w$ in $G_2$ then we get a stable set $\{u,v,w\}$ of size $3$ and this contradicts with $\alpha(G)=2$.
\end{itemize}
\end{proof}
Note that if $G\in \mathcal{C}_{k}^*(n)$ and $\alpha(G)=2$ then either $G$ is a $k$-clique with a path of size one hanging off a vertex of the clique (denoted by $F_{1,k}$) or  $G$ is a $k$-clique with a path of size two hanging off a vertex of the clique (denoted by $F_{2,k}$).

\begin{lemma}\label{cutvertexlemma}
	Let $G\in \mathcal{C}_k(n)$ with $\alpha(G)=2$. Let $x\in \mathbb{N}$ with $x\geq k$ and $u$ be a cut-vertex of $G$. Then, $\pi(G,x)\leq (x)_{\downarrow k}\, (x-1)^{n-k}$. Furthermore, the equality holds if and only if $G\cong F_{1,k}$ or $G\cong F_{2,k}$.
\end{lemma}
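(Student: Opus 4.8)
The plan is to exploit the rigid structure that a single-vertex cut forces on $G$. Since $u$ is a cut-vertex, $\{u\}$ is a stable cut-set of size $1$, so Proposition~\ref{cutset2prop} applies: $G\setminus u$ has exactly two components $G_1$ and $G_2$, both complete, say $G_1=K_a$ and $G_2=K_b$ with $a+b=n-1$ and $a,b\geq 1$. Because $G$ is connected, $u$ has a neighbour in each component, and by part (iv) we may assume after relabelling that $u$ is adjacent to all of $V(G_1)$; write $b'$ for the number of neighbours of $u$ in $G_2$, so $1\leq b'\leq b$. First I would record two numerical constraints: $\{u\}\cup V(G_1)$ induces $K_{a+1}$ and $V(G_2)$ induces $K_b$, so $a+1\leq\omega(G)\leq\chi(G)=k$ and $b\leq k$; hence $a\leq k-1$ and $b\leq k$, which in particular gives $n\leq 2k$.

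Next I would compute $\pi(G,x)$ exactly. Writing $G_1^{+}=G[\{u\}\cup V(G_1)]=K_{a+1}$ and $G_2^{+}=G[\{u\}\cup V(G_2)]$, the graph $G$ is the amalgam of $G_1^{+}$ and $G_2^{+}$ along the single vertex $u$, so the Complete Cut-set Theorem with $r=1$ yields
\[
\pi(G,x)=\frac{\pi(K_{a+1},x)\,\pi(G_2^{+},x)}{x}.
\]
Colouring the clique $K_b$ in $(x)_{\downarrow b}$ ways leaves exactly $x-b'$ admissible colours for $u$, whose $b'$ neighbours already occupy $b'$ distinct colours, so $\pi(G_2^{+},x)=(x)_{\downarrow b}\,(x-b')$. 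Using $(x)_{\downarrow a+1}/x=(x-1)_{\downarrow a}$ this gives the clean formula
\[
\pi(G,x)=(x-1)_{\downarrow a}\,(x)_{\downarrow b}\,(x-b').
\]

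Then I would split on whether $u$ is universal. If $b'=b$ then $\Delta(G)=n-1$ and Lemma~\ref{universallemma} gives the bound directly, with equality forcing $G=K_1\vee(K_{k-1}\cupdot(n-k)K_1)$; the hypothesis $\alpha(G)=2$ then forces $n=k+1$, i.e.\ $G\cong F_{1,k}$. If $b'<b$ then $\chi(G_2^{+})=b$ (colour $K_b$ and give $u$ the colour of a non-neighbour), so $k=\chi(G)=\max\{a+1,b\}$, and combined with $a+1\leq k$ and $b\leq k$ this means $a+1=k$ or $b=k$. The key point is that both subcases collapse to the same expression: substituting either $a=k-1,\ b=n-k$ or $b=k,\ a=n-k-1$ and using $(x)_{\downarrow m}=x\,(x-1)_{\downarrow m-1}$ twice, each gives
\[
\pi(G,x)=(x)_{\downarrow k}\,(x-1)_{\downarrow n-k-1}\,(x-b').
\]
It therefore suffices to check $(x-1)_{\downarrow n-k-1}(x-b')\leq (x-1)^{n-k}$. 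This is a product of $n-k$ factors of the form $x-i$ with $i\geq 1$, each at most $x-1$ and, thanks to $n\leq 2k$, each positive for every $x\geq k$; so the inequality holds. Equality forces every factor to equal $x-1$, hence $b'=1$ and $n-k-1\leq 1$: the case $n=k+1$ yields $G\cong F_{1,k}$, the case $n=k+2$ yields $G\cong F_{2,k}$, and $n\geq k+3$ produces a factor $x-2<x-1$ and strict inequality.

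The step I expect to be the main obstacle is the bookkeeping around equality: one must verify carefully that the two a priori distinct subcases ($a+1=k$ versus $b=k$) genuinely collapse to the \emph{same} polynomial, and then confirm that each surviving equality configuration is isomorphic to $F_{1,k}$ or $F_{2,k}$ rather than some other member of $\mathcal{C}^{*}_{k}(n)$, together with the positivity checks needed to license the factorwise comparison uniformly for all $x\geq k$.
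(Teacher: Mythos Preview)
Your argument is correct, but it takes a markedly different route from the paper's. The paper dispatches this lemma in two lines: once Proposition~\ref{cutset2prop} gives that $G-u$ consists of two cliques, one observes that $G$ is chordal (any induced cycle lies in one of the two blocks $G_1^{+}$, $G_2^{+}$, and each of these is trivially chordal), hence perfect, hence $\omega(G)=\chi(G)=k$; the conclusion, including the equality characterisation, then follows immediately from Lemma~\ref{general_k_clique_lemma}. By contrast, you compute $\pi(G,x)$ explicitly via the Complete Cut-set Theorem, obtain the closed form $(x-1)_{\downarrow a}(x)_{\downarrow b}(x-b')$, and then carry out a case split and a factorwise comparison with $(x-1)^{n-k}$. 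Your approach is self-contained---it needs neither chordality/perfection nor Lemma~\ref{general_k_clique_lemma}---and it yields an exact formula for $\pi(G,x)$ as a by-product; the paper's approach is shorter and more structural, folding all the inequality and equality work into the already-proved clique lemma. One minor remark: in your Case~2 the constraints $b'<b$ and $a\geq 1$ already force $n\geq k+2$, so the ``$n=k+1$ yields $F_{1,k}$'' clause in your equality discussion is vacuous there; $F_{1,k}$ arises only through your universal-vertex Case~1.
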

\begin{proof}
	By Proposition \ref{cutset2prop}, $G-u$ has exactly two connected components and they are complete graphs. Now it is easy to see that  $G$ is chordal and hence $\omega(G)=k$. Thus, the result follows by Lemma~\ref{general_k_clique_lemma}.
	
\end{proof}

\begin{lemma}\label{cutsetlemma}Let $G$ be a graph in $\mathcal{C}_k(n)$ with $\alpha(G)=2$ and $k\geq 4$. If $G$ has a stable cut-set of size $2$ then
$$\pi(G,x)\leq (x)_{\downarrow k}\, (x-1)^{n-k}$$ for all $x\in \mathbb{N}$ with $x\geq k$. Furthermore,  the equality is achieved if and only if $G\cong F_{2,k}$.
\end{lemma}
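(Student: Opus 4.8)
The plan is to reduce, via Proposition~\ref{cutset2prop}, to a single configuration and then to compute $\pi(G,x)$ on it explicitly.

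Write the stable cut-set as $S=\{u,w\}$, so $u\not\sim w$. By Proposition~\ref{cutset2prop}, $G-S$ has exactly two components $G_1\cong K_a$ and $G_2\cong K_b$ with $a+b=n-2$ and $\max\{a,b\}\ge k-1$, and each of $u,w$ is complete to $G_1$ or to $G_2$. Since $\omega(G)\le\chi(G)=k$, I would first split on $\omega(G)$. If $\omega(G)=k$, then Lemma~\ref{general_k_clique_lemma} gives the desired bound with equality if and only if $G\in\mathcal{C}_k^*(n)$; as $\alpha(G)=2$, this forces $G\cong F_{1,k}$ or $G\cong F_{2,k}$, and since $F_{1,k}$ has no stable cut-set of size $2$ whereas $F_{2,k}$ does, the only equality case under our hypotheses is $G\cong F_{2,k}$. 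It then remains to handle $\omega(G)=k-1$ and to prove there a \emph{strict} inequality. Taking $a=\max\{a,b\}\ge k-1$, the bound $\omega(G)=k-1$ forces $a=k-1$; neither $u$ nor $w$ can be complete to $G_1$ (that would produce a $K_k$), so by Proposition~\ref{cutset2prop}(iv) both are complete to $G_2$, whence $\{u\}\cup V(G_2)\cong K_{b+1}$ gives $b\le k-2$. Set $A=N_{G_1}(u)$, $B=N_{G_1}(w)$, $\alpha=|A|$, $\beta=|B|$ (both $\le k-2$), and $c=|A\cup B|$.

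To compute $\pi(G,x)$ I would apply the Edge Addition--Contraction Formula to the non-edge $uw$, so that $\pi(G,x)=\pi(G+uw,x)+\pi(G/uw,x)$. In $G+uw$ the pair $\{u,w\}$ is a $K_2$ separating $G_1$ from $G_2$, and the Complete Cut-set Theorem splits off the complete graph $K_{b+2}$ on $V(G_2)\cup\{u,w\}$, giving $\pi(G+uw,x)=(x-2)_{\downarrow b}\,\pi(H_1,x)$ with $H_1=(G+uw)[V(G_1)\cup\{u,w\}]$. In $G/uw$ the identified vertex is a cut-vertex and the Complete Cut-set Theorem splits off $K_{b+1}$, giving $\pi(G/uw,x)=(x-1)_{\downarrow b}\,\pi(H_1',x)$ with $H_1'=(G/uw)[V(G_1)\cup\{z\}]$. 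Both auxiliary graphs consist of the clique $G_1\cong K_{k-1}$ together with one or two extra vertices, so colouring $K_{k-1}$ first (its vertices then carry distinct colours) yields $\pi(H_1',x)=(x)_{\downarrow(k-1)}(x-c)$ and $\pi(H_1,x)=(x)_{\downarrow(k-1)}\big[(x-\alpha)(x-\beta)-(x-c)\big]$, the subtracted term accounting for the colourings in which $u$ and $w$ receive the same colour. Assembling and factoring out $(x-2)_{\downarrow(b-1)}$ gives the closed form $\pi(G,x)=(x)_{\downarrow(k-1)}\,(x-2)_{\downarrow(b-1)}\,\big[(x-\alpha)(x-\beta)(x-b-1)+b(x-c)\big]$.

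The crux, and the step I expect to be the main obstacle, is to feed in the hypothesis $\chi(G)=k$, which has not yet been used and without which the inequality can fail. I would use $\chi(G)=k\iff\pi(G,k-1)=0$. Evaluating the closed form at $x=k-1$ makes both prefactors strictly positive (here $\alpha,\beta\le k-2$ and $b\le k-2$), so the bracket must vanish; but at $x=k-1$ the bracket is $(k-1-\alpha)(k-1-\beta)(k-2-b)+b(k-1-c)$, a sum of two nonnegative terms, so both vanish, forcing $b=k-2$ and $c=k-1$ (that is, $A\cup B=V(G_1)$). Substituting these values lets the factor $(x-k+1)$ pull out, collapsing the closed form to $\pi(G,x)=(x)_{\downarrow(k-1)}(x-2)_{\downarrow(k-3)}(x-k+1)\big[(x-\alpha)(x-\beta)+k-2\big]$. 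Since now $n-k=k-1$ and $(x)_{\downarrow k}=(x)_{\downarrow(k-1)}(x-k+1)$, cancelling the common positive factor $(x)_{\downarrow(k-1)}(x-k+1)$ reduces the claim to $(x-2)_{\downarrow(k-3)}\big[(x-\alpha)(x-\beta)+k-2\big]<(x-1)^{k-1}$.

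This final inequality is elementary. Since each of the factors $x-2,\dots,x-k+2$ is strictly below $x-1$, one has $(x-2)_{\downarrow(k-3)}<(x-1)^{k-3}$ for $x\ge k\ge4$. For the remaining quadratic factor, $c=k-1$ forces $\alpha+\beta\ge k-1$ while $\alpha,\beta\le k-2$, and a short optimization of $(x-1)^2-(x-\alpha)(x-\beta)-(k-2)=(\alpha+\beta-2)x-\alpha\beta-(k-3)$ over this range gives $(x-\alpha)(x-\beta)+k-2\le(x-1)^2$ for $x\ge k$ (the expression is increasing in $x$, so it suffices to check $x=k$, where its minimum over the line $\alpha+\beta=k-1$ is at least $(k-1)(3k-11)/4>0$ for $k\ge4$). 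Multiplying the two bounds yields the strict inequality, so $\pi(G,x)<(x)_{\downarrow k}(x-1)^{n-k}$ throughout the case $\omega(G)=k-1$. Together with the first paragraph, equality holds if and only if $G\cong F_{2,k}$.
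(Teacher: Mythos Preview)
Your proof is correct and follows the same high-level strategy as the paper: split on whether $\omega(G)=k$ (handled by Lemma~\ref{general_k_clique_lemma}) or $\omega(G)=k-1$, use Proposition~\ref{cutset2prop} to pin down the structure, then apply addition--contraction to the non-edge $uw$ together with the Complete Cut-set Theorem on the two sides. The execution diverges, however. The paper obtains $b=k-2$ by a direct colouring-extension argument (if $|V(G_2)|<k-2$ one can properly $(k-1)$-colour $G$) and gets $A\cup B=V(G_1)$ immediately from $\alpha(G)=2$ (no common non-neighbour of $u$ and $w$), which yields $G/uw\cong K_1\vee(K_{k-1}\cupdot K_{k-2})$ and hence $\pi(G/uw,x)=(x-1)_{\downarrow k-1}(x)_{\downarrow k-1}$; on the other side it merely \emph{bounds} $\pi(H_1,x)\le (x)_{\downarrow k-1}(x-1)^2$ (since $H_1$ contains a spanning member of $\mathcal{C}^*_{k-1}(k+1)$), reaching $\pi(G,x)\le (x)_{\downarrow k}\,(x)_{\downarrow k-1}$ and the one-line endgame $x(x-2)<(x-1)^2$. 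Your route---computing $\pi(G,x)$ exactly in terms of $\alpha,\beta,b,c$ and then reading off $b=k-2$ and $c=k-1$ from $\pi(G,k-1)=0$---is a nice alternative and yields an explicit formula for $\pi(G,x)$, but it makes the final numerical inequality noticeably harder than necessary.
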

\begin{proof}
	 Let $S=\{u,v\}$ be a stable cut-set of $G$.
If $\omega(G)=k$ then the result follows from Lemma~\ref{general_k_clique_lemma}, so we may assume that $\omega(G)<k$.
By Proposition~\ref{cutset2prop}, the graph $G\setminus S$ has exactly two connected components, say $G_1$ and $G_2$, and we may assume $G_1\cong K_p$,  $G_2\cong K_q$ where $p\geq q$. Now, $p\geq k-1$ by Proposition~\ref{cutset2prop} and $\omega(G)<k$ by the assumption. Therefore, $p=k-1$. Since $\omega(G)<k$, every vertex in $S$ has at least one non-neighbor in $G_1$. Let $u'$ and $v'$ be two vertices of $G_1$ which are non-neighbors of $u$ and $v$ respectively.


Since $V(G_1)\nsubseteq N_G(u)$ and $V(G_1)\nsubseteq N_G(v)$, all vertices in $S$ are adjacent to all vertices in $G_2$ by Proposition~\ref{cutset2prop}. The graph $G_2$ has at most $k-2$ vertices, as $\omega(G)<k$. If $G_2$ has less than $k-2$ vertices then we can find a proper $k-1$ coloring $c$ of $G$ (we can first properly color the vertices of $G_1$ with colors $1,2,\dots k-1$ and assign  $c(u')$ (resp. $c(v')$) to $u$ (resp. $v$) and then we can properly color the vertices of $G_2$ with colors $\{1,2,\dots k-1\}\setminus \{c(u), c(v)\}$ which yields a proper $k-1$ coloring of $G$). Therefore $G_2$ has exactly $k-2$ vertices and $q=k-2$.

Since $\alpha(G)=2$, the vertices $u$ and $v$ have no common non-neighbor. Therefore,
$$G / uv \cong K_1 \vee (K_{k-1}\cupdot K_{k-2}).$$
Now it is easy to see that 
\begin{eqnarray}\label{contraction}
\pi(G/ uv,x )=(x-1)_{\downarrow k-1}\, (x)_{\downarrow k-1}.
\end{eqnarray}
Let $H_1$ (resp. $H_2$) be the subgraph of $G+uv$ induced by the vertex set $ V(G_1)\cup S$ (resp. $V(G_2)\cup S$). Now, the graphs $H_1$ and $H_2$ intersect at the edge $uv$ in $G+uv$. Therefore,
$$\pi(G+uv,x)=\frac{\pi(H_1,x)\, \pi(H_2,x)}{x(x-1)}.$$
Since $H_2\cong K_{k}$, we get $\pi(H_2,x)=(x)_{\downarrow k}$. Also, one of the vertices of $S$ has a neighbor in $G_1$, as $G$ is connected. So, $H_1$ contains a spanning subgraph which is isomorphic to a graph in $\mathcal{C}^*_{k-1}(k+1)$. Thus, $\pi(H_1,x)\leq (x)_{\downarrow k-1}(x-1)^2$. Now,
\begin{eqnarray}\label{addition}
\pi(G+uv,x)\leq \frac{(x)_{\downarrow k}\, (x)_{\downarrow k-1}\,(x-1)^2 }{x(x-1)}=(x-1)\,(x)_{\downarrow k-1}\, (x-1)_{\downarrow k-1}.
\end{eqnarray}
Using the edge addition-contraction formula and \eqref{contraction} and \eqref{addition} we get
\begin{eqnarray*}
\pi(G,x) &=& \pi(G+uv,x)+\pi(G/ uv,x)\\
&\leq & (x-1)\,(x)_{\downarrow k-1}\, (x-1)_{\downarrow k-1}\,+\, (x-1)_{\downarrow k-1}\, (x)_{\downarrow k-1}\\
&=& (x)_{\downarrow k}\, (x)_{\downarrow k-1}.
\end{eqnarray*}
 The graph $G$ has $2k-1$ vertices, so $(x)_{\downarrow k\,}(x-1)^{n-k}=(x)_{\downarrow k}\,(x-1)^{k-1}$. Now it is clear that  
 $$(x)_{\downarrow k}\, (x)_{\downarrow k-1} < (x)_{\downarrow k}\,(x-1)^{k-1} $$
 holds for $k\geq 4$, as $(x)_{\downarrow k-1}=x(x-1)(x-2)\cdots$ and $x(x-2)<(x-1)^2$.
 
 \end{proof}

\begin{theorem}\label{main}Let $G$ be a graph in $\mathcal{C}_k(n)$ with $\alpha(G)\leq 2$ and $k\geq 4$. Then, for every $x\in \mathbb{N}$ with $x\geq k$,
	$$\pi(G,x)\leq (x)_{\downarrow k}\, (x-1)^{n-k}.$$  Furthermore, the equality is achieved if and only if $G\cong F_{1,k}$, $G\cong F_{2,k}$ or $k=n$.
\end{theorem}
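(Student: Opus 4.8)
The plan is to peel off easy configurations with the lemmas already proved, reduce to a single irreducible (essentially $3$-connected) situation, and then attack that situation either by the edge addition--contraction formula or, more cleanly, by a matching reformulation of $\pi(G,x)$; the whole difficulty will sit in a purely combinatorial inequality about matchings of the triangle-free complement $\overline{G}$.

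First the reductions. If $\alpha(G)\le 1$ then $G=K_n$, so $n=k$ and $\pi(G,x)=(x)_{\downarrow k}$ meets the bound with equality, producing the case $k=n$. So assume $\alpha(G)=2$. If $\omega(G)=k$, Lemma~\ref{general_k_clique_lemma} gives the bound and pins equality to $\mathcal{C}^*_k(n)$; intersecting with $\alpha(G)=2$ leaves exactly $F_{1,k}$ and $F_{2,k}$ by the remark after Proposition~\ref{cutset2prop}. Hence I may assume $\omega(G)<k$, where I must prove the \emph{strict} inequality, since none of $F_{1,k},F_{2,k},K_k$ has $\omega<k$. In this regime $G$ has no cut-vertex, because the proof of Lemma~\ref{cutvertexlemma} shows a cut-vertex would make $G$ chordal and force $\omega(G)=k$; and a stable cut-set of size $2$ is finished by Lemma~\ref{cutsetlemma}. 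The only remaining small cut is an edge $uv\in E(G)$ whose removal disconnects $G$: here $\alpha(G)=2$ forces two complete components, so writing $G=H_1\cup H_2$ with $H_1\cap H_2\cong K_2$ and applying the Complete Cut-set Theorem gives $\pi(G,x)=\pi(H_1,x)\pi(H_2,x)/(x(x-1))$, where the division by $x(x-1)$ is exactly what lets an induction on $n$ (with the known $k\le 3$ results, restricted to $\alpha\le 2$, as base cases) keep the product under the target. This leaves the irreducible case: $G$ is $3$-connected with $\alpha(G)=2$ and $\omega(G)<k$.

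For the irreducible case the natural move is edge addition--contraction on a non-edge $uv$. Since $\alpha(G)=2$, the vertices $u,v$ have no common non-neighbour, so the identified vertex is universal in $G/uv$; thus $G/uv=(G-u-v)\vee K_1$ and $\pi(G/uv,x)=x\,\pi(G-u-v,x-1)$, a term controlled by Lemma~\ref{universallemma} (or by induction on $G-u-v$), while $\pi(G+uv,x)$ is handled by induction as $G+uv$ is connected with $\alpha\le 2$, chromatic number $\ge k$, and one fewer non-edge. \textbf{The main obstacle is precisely here:} the addition term alone can already equal $(x)_{\downarrow k}(x-1)^{n-k}$, so bounding both summands separately by the target overshoots. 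Closing the argument requires extracting a genuine deficit --- for instance by choosing $uv$ to lie in \emph{no} maximum matching of $\overline{G}$ (so that $\chi(G/uv)>k$ and the contraction term loses a full factor of $x-1$), which one would locate through the Gallai--Edmonds structure of the triangle-free non-bipartite graph $\overline{G}$; but such an edge need not exist (e.g.\ $\overline{G}=C_7$), so a strengthened inductive bound carrying the slack is needed in general.

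The cleanest way I see to record that slack is a global reformulation. Since $\alpha(G)\le 2$, every colour class is a vertex or a non-edge of $G$, i.e.\ a clique of the triangle-free complement $\overline{G}$; grouping proper colourings by their monochromatic pairs yields
\[
\pi(G,x)=\sum_{j=0}^{\nu}m_j(\overline{G})\,(x)_{\downarrow (n-j)},\qquad \nu:=\nu(\overline{G})=n-k,
\]
where $m_j(\overline{G})$ counts the $j$-matchings of $\overline{G}$. Writing $y=x-k\ge 0$ and using $(x)_{\downarrow(k+i)}=(x)_{\downarrow k}\,(y)_{\downarrow i}$, the theorem reduces to the combinatorial inequality
\[
\sum_{i=0}^{\nu}m_{\nu-i}(\overline{G})\,(y)_{\downarrow i}\ \le\ (y+k-1)^{\nu}\qquad(y\ge 0),
\]
for triangle-free $\overline{G}$ with $\nu(\overline{G})=\nu$ and $\alpha(\overline{G})=\omega(G)\le k-1$. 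This is where the real work lives, and I would prove it by induction on $\nu$, peeling a maximum-matching edge of $\overline{G}$ and bounding the numbers $m_{\nu-i}(\overline{G})$ against those of the extremal complements $\overline{F_{1,k}},\overline{F_{2,k}}$ (a star together with isolated vertices). The falling factorials $(y)_{\downarrow i}$, not the powers $y^i$, are essential: a term-wise comparison of matching numbers already fails at $F_{2,k}$, where $m_1>2(k-1)$. Finally, equality for \emph{all} $x\ge k$ forces the two polynomials to agree, which happens only for $\nu\in\{0,1,2\}$, recovering exactly $k=n$, $G\cong F_{1,k}$, and $G\cong F_{2,k}$.
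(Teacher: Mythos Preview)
Your reductions are sound and your diagnosis of the single-step addition--contraction is correct: bounding $\pi(G+uv,x)$ by the target and then adding any positive contraction term overshoots. But you then retreat to the matching reformulation and leave its core inequality
\[
\sum_{i=0}^{\nu}m_{\nu-i}(\overline{G})\,(y)_{\downarrow i}\ \le\ (y+k-1)^{\nu}\qquad(y\ge 0)
\]
unproved; ``induction on $\nu$, peeling a maximum-matching edge'' is a hope, not an argument, and that inequality is exactly where the entire difficulty of the theorem has been relocated. As written, the proposal is a (correct) reformulation, not a proof.

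The paper closes the gap without abandoning addition--contraction. Pick $u$ of \emph{maximum} degree and add \emph{all} of its missing edges $uv_1,\dots,uv_t$ in succession:
\[
\pi(G,x)=\pi(G_t,x)+\sum_{i=1}^{t}\pi(H_i,x),\qquad H_i=G_{i-1}/uv_i,\quad t=n-1-\Delta(G).
\]
Each $H_i$ has a universal vertex (your observation), \emph{and so does $G_t$}, since $u$ is now universal there. Hence $\pi(G_t,x)=x\,\pi(G-u,x-1)$ and $\pi(H_i,x)=x\,\pi(G-\{u,v_i\},x-1)$; the deleted graphs are connected (this is precisely what ``no stable cut-set of size $\le 2$'' buys---your separate edge-cut reduction is unnecessary), have $\alpha\le 2$, and have fewer vertices, so induction on $n$ applies. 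The shift $x\mapsto x-1$ converts every $(x-1)$ in the inductive bound into $(x-2)$, giving
\[
\pi(G,x)\le (x)_{\downarrow k}\,(x-2)^{n-k-1}\bigl(x-2+t\bigr),
\]
and Brooks' theorem yields $\Delta(G)\ge k$, so $t\le n-1-k$; then $(x-2)^{n-k-1}(x-3+n-k)<(x-1)^{n-k}$ is an elementary binomial estimate. The slack you were searching for comes from forcing \emph{every} summand, including the final ``addition'' term $G_t$, to carry the $x\mapsto x-1$ shift, and from controlling the number $t$ of contraction terms via Brooks. (For $k=4$ the paper simply checks all $n\le 8$ by machine, using $\alpha(G)\chi(G)\ge n$.)
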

\begin{proof}
Since $\alpha(G)\chi(G)\geq n$, the equality $k=4$ implies $n\leq 8$. Computations show that the result holds to be true when $n\leq 8$. So we may assume that $k\geq 5$.
	We proceed by induction on the number of vertices. For the basis step, $n = k$ and $G$ is a complete graph. Hence, $\pi(G,x)=(x)_{\downarrow k}$ and  now the result is clear.

	Now we may assume that $G$ is a $k$-chromatic graph of order at least $k+1$. By Lemma~\ref{cutvertexlemma} and Lemma~\ref{cutsetlemma}, we may assume that $G$ has no stable cut-set of size at most $2$. Also, if  $\Delta(G)=n-1$ then the result follows by Lemma~\ref{universallemma}. Hence, we shall assume that $\Delta(G)<n-1$. Let $u$ be a vertex of maximum degree.  Set $t=n-1-\Delta(G)$ and let $\{v_1,\dots , v_t\}$ be the set of non-neighbors of $u$ in $G$,\ (that is, $\{v_1,\dots , v_t\}=V(G)\setminus N_G[u]$).
	We set $G_0 = G$ and
	$$G_i=G_{i-1}+uv_i$$
	$$H_i=G_i/ uv_i$$
	for $i=1,\dots , t$.  By applying the Edge Addition-Contraction Formula successively,
	\begin{equation}\label{mainsum}
	\pi(G,x)=\pi(G_t,x)+\sum_{i=1}^t\pi(H_i,x).
	\end{equation}

	Note that $k\leq \chi(G_t) \leq k+1$ and $k\leq \chi(H_i)\leq k+1$ for $i = 1,2,\ldots,t$. Since $u$ is a universal vertex of $G_t$, we have
	 \begin{equation}\label{G_t_univers}
	 \pi(G_t,x)=x\,\pi(G-u,x-1).
	 \end{equation}

Clearly, $\alpha(G-u)\leq 2$. Also, $G-u$ is connected as $G$ has no cut-vertex by the assumption. So, by the induction hypothesis, $$\pi(G-u,x)\leq (x)_{\downarrow \chi(G-u)}(x-1)^{n-1-\chi(G-u)}.$$
	 Now replacing $x$ with $x-1$ in the latter, we get
	 $$\pi(G-u,x-1)\leq (x-1)_{\downarrow \chi(G-u)}(x-2)^{n-1-\chi(G-u)}.$$
 Note that $k-1\leq \chi(G-u)\leq k$. Also, $(x-1)_{\downarrow k}(x-2)^{n-1-k}<(x-1)_{\downarrow k-1}(x-2)^{n-k}$. Therefore,
	$$\pi (G-u,x-1)\leq (x-1)_{\downarrow k-1}(x-2)^{n-k}.$$
	Since $(x)_{\downarrow k}=x(x-1)_{\downarrow k-1}$, by \eqref{G_t_univers} we obtain that
	\begin{equation}\label{G_tbound}
	\pi(G_t,x)\leq (x)_{\downarrow k}(x-2)^{n-k}.
	\end{equation}
	
	Now we shall give an upper bound for $\pi (H_i,x)$ for all $i$. Observe that $$H_i\cong K_1 \vee (G-\{u,v_i\})$$
	because $\alpha(G)=2$ and hence every vertex in $G-\{u,v_i\}$ is adjacent to either $u$ or $v_i$ in $G$. Therefore,
	\begin{equation}\label{H_i}
	\pi(H_i,x)=x\,\pi(G-\{u,v_i\},x-1).
	\end{equation}
	
	It is clear that $\alpha(G-\{u,v_i\})\leq 2$. Since $G$ has no stable cut-set of size $2$, the graph $G-\{u,v_i\}$ is connected. Also, $k-1\leq \chi(G-\{u,v_i\})\leq k$, as $u$ and $v_i$ are nonadjacent in $G$ and $\chi(G)=k$. By the induction hypothesis, 
	$$\pi(G-\{u,v_i\},x)\leq (x)_{\downarrow \chi(G-\{u,v_i\})}(x-1)^{n-2-\chi(G-\{u,v_i\})}.$$
	 Now replacing $x$ with $x-1$ in the latter, we get
	$$\pi(G-\{u,v_i\},x-1)\leq (x-1)_{\downarrow \chi(G-\{u,v_i\})}(x-2)^{n-2-\chi(G-\{u,v_i\})}.$$
 Observe that $(x-1)_{\downarrow k}(x-2)^{n-k-2}<(x-1)_{\downarrow k-1}(x-2)^{n-k-1}$. Thus,
	$$\pi(G-\{u,v_i\},x-1)\leq (x-1)_{\downarrow k-1}(x-2)^{n-k-1}.$$
	Since $(x)_{\downarrow k}=x(x-1)_{\downarrow k-1}$, by \eqref{H_i} we obtain that
	\begin{equation}\label{H_i_bound}
	\pi(H_i,x)\leq (x)_{\downarrow k}(x-2)^{n-k-1}.
	\end{equation}
	By \eqref{mainsum}, \eqref{G_tbound} and \eqref{H_i_bound}, we get
	\begin{eqnarray*}
	\pi(G,x)&\leq & (x)_{\downarrow k}(x-2)^{n-k}+(n-1-\Delta(G))(x)_{\downarrow k}(x-2)^{n-k-1}\\
	&=& (x)_{\downarrow k}(x-2)^{n-k-1}(x-3+n-\Delta(G)).
	\end{eqnarray*}
	Now, it suffices to show that $(x-2)^{n-k-1}(x-3+n-\Delta(G))\leq (x-1)^{n-k}$. The graph $G$ is neither a complete graph nor an odd cycle, so $\Delta(G)\geq k$ by Brook's Theorem.  Hence, $n-\Delta(G)\leq n-k$. Now,
	\begin{eqnarray*}
	(x-3+n-\Delta(G))(x-2)^{n-k-1} &\leq & (x-3+n-k)\,(x-2)^{n-k-1}\\
	&=&(x-2-1+n-k)\,(x-2)^{n-k-1}\\
	&=&(x-2)^{n-k}-(x-2)^{n-k-1}+(n-k)(x-2)^{n-k-1}\\
	&<&(x-2)^{n-k}+(n-k)(x-2)^{n-k-1}\\
	&\leq&(x-2+1)^{n-k}\\
	&=& (x-1)^{n-k}
	\end{eqnarray*}
	where the last inequality holds, as $$(x-2+1)^{n-k}=(x-2)^{n-k}+(n-k)(x-2)^{n-k-1}+{n-k\choose 2}(x-2)^{n-k-2}+\cdots .$$
	Thus, $\pi(G,x)\leq (x)_{\downarrow k}(x-1)^{n-k}$ and the result follows.
\end{proof}

\vskip0.4in

\bibliographystyle{elsarticle-num}

\end{document}